\newtheorem{theorem}{Theorem}[section]
\newtheorem*{theorem*}{Theorem}
\newtheorem{proposition}[theorem]{Proposition}
\newtheorem{definition}[theorem]{Definition}
\newtheorem{remark}[theorem]{Remark}
\newcommand{\R}{\mathbb{R}}
\newcommand{\C}{\mathbb{C}}
\newcommand{\CC}{\mathbb{C}}
\newcommand{\bbP}{\mathbb{P}}
\newcommand{\Z}{\mathbb{Z}}
\newcommand{\N}{\mathbb{N}}
\newcommand{\T}{\widetilde}
\begin{document}

\title[Multiplicity and degree as bi-Lipschitz invariants]
{Multiplicity and degree as bi-Lipschitz invariants for complex sets}

\author[J. Fern\'andez de Bobadilla]{Javier Fern\'andez de Bobadilla}
\address{Javier Fern\'andez de Bobadilla:  
(1) IKERBASQUE, Basque Foundation for Science, Maria Diaz de Haro 3, 48013, 
    Bilbao, Bizkaia, Spain
(2) BCAM  Basque Center for Applied Mathematics, Mazarredo 14, E48009 Bilbao, 
Basque Country, Spain } 
\email{jbobadilla@bcamath.org}

\author[A. Fernandes]{Alexandre Fernandes}
\author[J. E. Sampaio]{J. Edson Sampaio}
\address{Alexandre ~Fernandes and J. Edson ~Sampaio - 
Departamento de Matem\'atica, Universidade Federal do Cear\'a, 
Rua Campus do Pici, s/n, Bloco 914, Pici, 60440-900, 
Fortaleza-CE, Brazil}  

 \email{alex@mat.ufc.br}
 \email{edsonsampaio@mat.ufc.br}

\keywords{Bi-Lipschitz, Degree, Zariski's Conjecture}
\subjclass[2010]{14B05; 32S50}
\thanks{The first named author is partially supported by IAS and by ERCEA 615655 NMST Consolidator Grant, MINECO by the project reference MTM2013-45710-C2-2-P, by the Basque Government through the 
BERC 2014-2017 program, by Spanish Ministry of Economy and Competitiveness MINECO: BCAM Severo Ochoa excellence accreditation SEV-2013-0323, by
Bolsa Pesquisador Visitante Especial (PVE) - Ciencias sem Fronteiras/CNPq Project number:  401947/2013-0 and by Spanish MICINN project MTM2013-45710-C2-2-P. 
The second named author was partially supported by CNPq-Brazil grant 302764/2014-7}

\begin{abstract}
We study invariance of multiplicity of complex analytic germs and degree of complex affine sets under outer bi-Lipschitz 
transformations (outer bi-Lipschitz homeomorphims of germs in the first case and outer bi-Lipschitz homeomorphims at infinity 
in the second case). We prove that invariance of multiplicity in the local case is equivalent to invariance of degree in the global case. 
We prove invariance for curves and surfaces. In the way we prove invariance of the tangent cone and relative multiplicities at infinity 
under outer bi-Lipschitz homeomorphims at infinity, and that the abstract topology of 
a homogeneous surface germ determines its multiplicity.
\end{abstract}

\maketitle

\section{Introduction}

We study invariance of multiplicity of complex analytic germs and degree of complex affine sets under outer bi-Lipschitz 
transformations: outer bi-Lipschitz homeomorphisms of germs in the first case and outer bi-Lipschitz homeomorphisms at infinity 
in the second case (see Definition~\ref{def_inf}). 

The local problem may be seen as a bi-Lipschitz version of Zariski multiplicity problem~\cite{Zariski:1971}, that asks whether 
the embedded topology of complex hypersurface germs determine their multiplicity. It is well known that the abstract topological
type of hypersurface germs (i.e the topology of the intersection of the germs with a smal ball) does not determine the 
multiplicity. This is clear for curves, since their abstract topology is too simple, but also in the case of surfaces, 
where the abstract topology is very rich, examples are known (see Example 2.22 at~\cite{Nemethi:1999}).

In contrast with this situation we conjecture, as we will make precise shortly, that the local and at infinity outer bi-Lipschitz 
geometry (which does not take into account the embedding) determine the multiplicity and degree of complex sets respectively. 
It is known that the inner bi-Lipschitz geometry does not determine multiplicity (for example complex curves).

\begin{enumerate}[leftmargin=0pt]
\item[]{\bf Conjecture \~A1($d$)} Let $X\subset \C^n$ and $Y\subset \C^m$ be two complex analytic sets with $\dim X=\dim Y=d$. 
If their germs at $0\in\C^n$ and $0\in\C^m$, respectively, are outer bi-Lipschitz homeomorphic, i.e. there exists an outer bi-Lipschitz 
homeomorphism $\varphi\colon (X,0)\to (Y,0)$, then  their multiplicities $m(X,0)$ and $m(Y,0)$ are equal.
\item[]{\bf Conjecture  A1($d$)} Let $X\subset \C^n$ and $Y\subset \C^m$ be two complex algebraic sets with $\dim X=\dim Y=d$. 
If $X$ and $Y$ are outer bi-Lipschitz homeomorphic at infinity (see Definition~\ref{def_inf}), then we have the equality ${\rm deg}(X)={\rm deg}(Y)$.
\end{enumerate}

Conjecture \~A1($d$) was approached by some authors.
G. Comte~\cite{Comte:1998} proved that the multiplicity of complex analytic germs  is invariant under outer bi-Lipschitz homeomorphism
with Lipschitz constants close enough to 1 (this is a severe assumption). Neumann and Pichon in \cite{N-P}, with previous contributions of 
Pham and Teissier in \cite{P-T} and Fernandes in \cite{F}, proved that the outer bi-Lipschitz geometry of plane curves determine the Puiseux pairs, and 
as a consequence proved \~A1($1$). More recently, W. Neumann and 
A. Pichon in \cite{NeumannP:2016} showed that the multiplicity is an outer bi-Lipschitz invariant in the case of normal surface 
singularities, as a consequence of a very detailed and involved study of the outer bi-Lipschitz geometry for that class. In the 
non-normal surface case the only partial contribution is the fact that the {\em embedded} bi-Lipschitz geometry determines 
multiplicity in the {\em hypersurface} case (see \cite{FernandesS:2016}).
Conjecture A1($d$) is largely unexplored up to our knowledge.

Our main results are the following. We prove that Conjecture \~A1($d$) is equivalent to Conjecture A1($d$) (Theorem \ref{main-theorem}). 
We prove the conjectures for curves and surfaces ($d=1,2$) (Theorems~\ref{curves} and~\ref{main-result}. 
For general $d$ we prove in Theorem \ref{application1} 
that Conjecture A1 holds for algebraic 
hypersurfaces in $\C^n$ whose all irreducible components of their tangent cones at infinity have singular locus with 
dimension $\leq 1$ and, as an immediate corollary of it, we obtain that degree of complex algebraic surfaces in $\C^3$ is an embedded bi-Lipschitz invariant at infinity.

The way to reach this results is to prove that the outer Lipschitz geometry at
infinity determines the tangent cone at infinity and the relative multiplicities at infinity (Theorem \ref{multiplicities}). 
These are versions ``at infinity''
of the corresponding results for germs in \cite{Sampaio:2016} and \cite{FernandesS:2016} respectively. 
In order to have an idea of the ingredients of the statement
let us remark that in the hypersurface case the tangent cone at infinity is the set defined by the 
highest degree form of the defining equation. The relative multiplicities at infinity are the exponents appearing in the 
factorization in irreducible components of the highest degree form. Precise definitions are in Section~\ref{section:preliminaries}.

The case $d=1$ comes very easily from the last mentioned result. For the $d=2$ case the new idea is to find 
the degree of a homogeneous irreducible affine algebraic set $S$ as the torsion part of a homology group of $S\setminus\{O\}$.
We use the Leray spectral sequence associated with its projectivization for that purpose. In particular we show that the abstract topology of 
a homogeneous surface germ determines its multiplicity.

The organization of the paper is as follows. In Section \ref{section:preliminaries} we recall the necessary basic definitions and introduce 
relative multiplicities at infinity. In Section~\ref{section:mainresults} we prove the results described above.

\bigskip


\section{Preliminaries}\label{section:preliminaries}

\subsection{Multiplicity, degree and tangent cones}

See \cite{Chirka:1989} for a definition of multiplicity for higher codimension analytic germs 
in $\C^n$.

\begin{definition}
 Let $A$ be a closed algebraic subset in $\C^n$. We define {\bf the degree of} $A$ to be the degree of its projective completion~\cite{Chirka:1989}.
\end{definition}

\begin{remark}
If $A$ is a homogeneous algebraic set in $\C^n$ (i.e defined by homogeneous polynomials), its degree coincides with its multiplicity at the 
origin of $\C^n$.
\end{remark}   

Now we set the exact notion of tangent cone that we will use along the paper and we list some of its properties.

\begin{definition}
Let $A\subset \R^n$ be an unbounded subset. We say that $v\in \R^n$ is a {\bf tangent vector of} $A$ {\bf at infinity} if there is a sequence of points $\{x_i\}_{i\in \N}\subset A$ such that $\lim\limits_{i\to \infty} \|x_i\|=+\infty $ and there is a sequence of positive numbers $\{t_i\}_{i\in \N}\subset\R^+$ such that 
$$\lim\limits_{i\to \infty} \frac{1}{t_i}x_i= v.$$
Let $C_{\infty }(A)$ denote the set of all tangent vectors of $A$ at infinity. This subset $C_{\infty }(A)\subset\R^n$ is called 
{\bf the tangent cone of} $A$ {\bf at infinity}.
\end{definition}

\begin{proposition}[Proposition 4.4 in \cite{FernandesS:2017}]
Let $Z\subset \R^n$ be an unbounded semialgebraic set. A vector $v\in\R^n$ belongs to $C_{\infty }(Z)$ if, and only if, there exists a continuous semialgebraic  curve $\gamma\colon (\varepsilon ,+\infty )\to Z$  such that $\lim\limits _{t\to +\infty }|\gamma(t)|=+\infty $ and $\gamma(t)=tv+o_{\infty }(t),$ where $g(t)=o_{\infty }(t)$ means $\lim\limits _{t\to +\infty }\frac{g(t)}{t}=0$.
\end{proposition}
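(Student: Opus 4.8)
The plan is to prove the two implications separately, the ``if'' direction being immediate and the ``only if'' direction relying on an inversion that converts the behaviour of $Z$ at infinity into the behaviour of an auxiliary semialgebraic set at the origin, where the classical semialgebraic Curve Selection Lemma applies.

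For the easy direction, suppose such a curve $\gamma$ exists. Setting $t_i=i$ and $x_i=\gamma(i)$ for $i\in\N$ we get $\|x_i\|=|\gamma(i)|\to+\infty$ and $\frac{1}{t_i}x_i=\frac{\gamma(i)}{i}=v+\frac{o_\infty(i)}{i}\to v$, so $v\in C_\infty(Z)$ straight from the definition. For the converse, first dispose of the case $v=0$: since $Z$ is unbounded its inversion image accumulates at $0$, so the Curve Selection Lemma produces a semialgebraic curve in $Z$ tending to infinity, and reparametrizing it by $t=\|\cdot\|^2$ (legitimate by the semialgebraic monotonicity theorem) yields a curve with $\|\gamma(t)\|=\sqrt{t}=o_\infty(t)$, as required.

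Assume now $v\neq 0$ and consider the semialgebraic involution $\sigma\colon\R^n\setminus\{0\}\to\R^n\setminus\{0\}$, $\sigma(x)=x/\|x\|^2$, which preserves directions and inverts norms. Writing $W=\sigma(Z)$, a short computation shows that if $x_i\in Z$, $\|x_i\|\to\infty$, $t_i>0$ and $x_i/t_i\to v$, then $y_i=\sigma(x_i)\in W$ satisfy $y_i\to 0$ and $t_iy_i\to w:=v/\|v\|^2$; hence $w$ lies in the tangent cone of $W$ at the origin. It then remains to realize $w$ by a local semialgebraic curve with the correct leading term and to invert back. To this end introduce $N=\{(y,s)\in\R^n\times\R : s>0,\ sy\in W\}$, which is semialgebraic and satisfies $(w,0)\in\overline{N}$. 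Applying the Curve Selection Lemma at the point $(w,0)$ gives a continuous semialgebraic arc $s'\mapsto(a(s'),b(s'))\in N$ with $(a(0),b(0))=(w,0)$ and $b(s')>0$ for $s'>0$; the curve $\delta(s')=b(s')a(s')\in W$ then tends to $0$ and satisfies $\delta(s')=b(s')w+o(b(s'))$. Using the monotonicity theorem to reparametrize by $s=b(s')$ we obtain $\widetilde\delta(s)=sw+o(s)$ inside $W$.

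Finally set $\gamma(t):=\sigma\bigl(\widetilde\delta(1/t)\bigr)\in\sigma(W)=Z$. Since $\sigma$ is an involution with $\sigma(w)=v$, the same leading-term computation run in reverse gives $\gamma(t)=tv+o_\infty(t)$ with $|\gamma(t)|\to+\infty$, completing the argument. The main obstacle is precisely this bookkeeping of leading terms: one must choose the inversion with the square in the denominator (rather than $x/\|x\|$) so that the radial rescaling matches the linear scaling by $t_i$, and one must invoke the monotonicity theorem to pass from the intrinsic arc parameter $s'$ to the clean parameters $s$ and $t$, thereby guaranteeing that the leading coefficient of $\gamma$ is exactly $v$ and not merely a positive multiple of it.
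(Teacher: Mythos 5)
Your proof is correct. The paper itself contains no proof of this proposition --- it is quoted from Proposition 4.4 of \cite{FernandesS:2017} --- and your argument follows essentially the same route as that reference: reduce the statement at infinity to a statement at the origin via the inversion $\sigma(x)=x/\|x\|^2$ (which sends the direction $v$ to $w=v/\|v\|^2$ and back), then apply the semialgebraic curve selection lemma to the blown-up set $N=\{(y,s): s>0,\ sy\in W\}$ at $(w,0)$, and use the monotonicity theorem to normalize the parameter so that the leading coefficient is exactly $v$. The only cosmetic points worth recording are that $\sigma$ should be applied to $Z\setminus\{0\}$, and that in the case $v\neq 0$ one should note $t_i\to+\infty$ (immediate from $\|x_i\|\to\infty$ and $\|x_i\|/t_i\to\|v\|\neq 0$) before setting $s_i=1/t_i$; neither affects the argument.
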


Let $X\subset\C^n$ be a complex algebraic subset. Let $\mathcal{I}(X)$ be the ideal of $\C[x_1,\cdots,x_n]$ given by the polynomials which
vanishes on $X$. For each $f\in\C[x_1,\cdots,x_n]$, let us denote by $f^*$ the maximum degree form of $f$. Define $\mathcal{I}^*(X)$ to be 
generated by the $f^*$ when $f\in \mathcal{I}(X)$.

\begin{proposition}[Theorem 1.1 in \cite{LeP:2016}]\label{algebricity}
Let $X\subset\C^n$ be a complex algebraic subset. Then, $C_{\infty }(X)$ is the affine algebraic subset defined by $\mathcal{I}^*(X)$.
We emphasize that we take $C_{\infty }(X)$ as a set, with reduced structure.
\end{proposition}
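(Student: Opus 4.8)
The plan is to prove the two set-theoretic inclusions $C_\infty(X)\subseteq V(\mathcal{I}^*(X))$ and $V(\mathcal{I}^*(X))\subseteq C_\infty(X)$ separately, where $V(\mathcal{I}^*(X))$ denotes the affine zero set of $\mathcal{I}^*(X)$. The organizing idea is that $\mathcal{I}^*(X)$ is exactly what one obtains by homogenizing $\mathcal{I}(X)$ and restricting to the hyperplane at infinity: embedding $\C^n\subset\CP^n$ as $\{x_0\neq 0\}$ with hyperplane at infinity $H_\infty=\{x_0=0\}$, for $f\in\mathcal{I}(X)$ of degree $d$ the homogenization $f^h=x_0^{d}f(x_1/x_0,\dots,x_n/x_0)$ satisfies $f^h|_{x_0=0}=f^*$. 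Since the projective closure $\overline{X}$ is cut out set-theoretically by $\langle f^h : f\in\mathcal{I}(X)\rangle$, intersecting with $H_\infty$ and reading the result in $H_\infty=\CP^{n-1}$ shows that, for $v\neq 0$, one has $v\in V(\mathcal{I}^*(X))$ if and only if $[v]\in\overline{X}\cap H_\infty$. Thus the statement reduces to identifying $C_\infty(X)\setminus\{0\}$ with the affine cone over $\overline{X}\cap H_\infty$.

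For the inclusion $C_\infty(X)\subseteq V(\mathcal{I}^*(X))$ I would argue directly. Let $v\in C_\infty(X)$; the case $v=0$ is immediate since every $f^*$ is a form of positive degree, so assume $v\neq 0$, with $x_i\in X$, $\|x_i\|\to\infty$, $t_i>0$ and $x_i/t_i\to v$, whence $t_i\to\infty$. Fix $f\in\mathcal{I}(X)$ of degree $d$ and write $f=f^*+g$ with $\deg g<d$. Since $x_i/t_i$ stays bounded, each monomial of $g$ of degree $k<d$ contributes $x_i^{\alpha}/t_i^{d}=(x_i/t_i)^{\alpha}\,t_i^{k-d}\to 0$, while homogeneity gives $f^*(x_i)/t_i^{d}=f^*(x_i/t_i)\to f^*(v)$. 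As $f(x_i)=0$ for all $i$, dividing by $t_i^{d}$ and passing to the limit yields $f^*(v)=0$. Since this holds for every $f\in\mathcal{I}(X)$, we get $v\in V(\mathcal{I}^*(X))$.

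The reverse inclusion is the substantive part. Given $v\in V(\mathcal{I}^*(X))$, I may assume $v\neq 0$ (the origin lies in $C_\infty(X)$ because $X$ is unbounded), so by the reduction above $[v]\in\overline{X}\cap H_\infty$. I would then select a local complex-analytic branch of $\overline{X}$ through $[v]$ meeting $X$: a Puiseux arc $\beta(\tau)$, $\tau\in\D\setminus\{0\}$, with $\beta(\tau)\in X$ and $\beta(\tau)\to[v]$ in $\CP^n$ as $\tau\to 0$. Working in an affine chart $x_k\neq 0$ with $v_k\neq 0$ and expanding, the affine representative of $\beta$ behaves to leading order as $\beta(\tau)=C(\tau)\,v+o(|C(\tau)|)$ with $C(\tau)=c\,\tau^{-m}\to\infty$ for some $c\in\C^{*}$ and $m\geq 1$. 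The crucial point is that, although $[v]$ only determines the complex line $\C\cdot v$, the argument of $C(\tau)=c\,\tau^{-m}$ runs over all phases as $\arg\tau$ varies; hence one can choose a sequence $\tau_i\to 0$ for which $C(\tau_i)$ is real and positive. Setting $x_i=\beta(\tau_i)\in X$ and $t_i=C(\tau_i)>0$ gives $\|x_i\|\to\infty$ and $x_i/t_i=\big(C(\tau_i)/|C(\tau_i)|\big)v+o(1)=v+o(1)\to v$, so $v\in C_\infty(X)$.

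I expect the main obstacle to be precisely this last phase-matching step: the definition of $C_\infty(X)$ allows rescaling only by positive reals $t_i$, whereas $\overline{X}\cap H_\infty$ remembers only the complex direction $\C\cdot v$; reconciling the two forces one to exploit the complex-analytic (rather than merely real semialgebraic) structure of the arc, so that varying $\arg\tau$ supplies the missing phases and $C_\infty(X)$ turns out to be the full complex cone. A secondary technical point is the set-theoretic identification $\overline{X}\cap H_\infty=V(\mathcal{I}^*(X))$ inside $\CP^{n-1}$, which relies on the standard fact that the ideal generated by the homogenizations $f^h$, $f\in\mathcal{I}(X)$, defines the projective closure $\overline{X}$; passing to reduced structures, as in the statement, avoids any saturation subtleties.
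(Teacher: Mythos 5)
The paper itself offers no proof of this proposition: it is imported verbatim as Theorem 1.1 of L\^e--Pham \cite{LeP:2016}, so there is no internal argument to compare yours against. Judged on its own merits, your proof is correct and is the natural one. The easy inclusion $C_{\infty}(X)\subseteq V(\mathcal{I}^*(X))$, obtained by writing $f=f^*+g$ and dividing $f(x_i)=0$ by $t_i^{\deg f}$, is fine (and you correctly note $t_i\to\infty$ when $v\neq 0$). The identification of $V(\mathcal{I}^*(X))\setminus\{0\}$ with the cone over $\overline{X}\cap H_\infty$ rests on the standard fact that $I(\overline{X})$ is generated by the homogenizations $f^h$ of \emph{all} elements of $\mathcal{I}(X)$ (not merely of a generating set), and you use it in exactly that form, so no saturation issue arises at the level of sets. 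Most importantly, your diagnosis of where the real content lies is accurate: a real (semialgebraic) curve selection would only produce $e^{i\theta}v\in C_{\infty}(X)$ for a single phase $\theta$, and it is the holomorphicity of the Puiseux arc, via $C(\tau)=c\,\tau^{-m}(1+o(1))$ with $m\geq 1$, that supplies all phases and shows $C_{\infty}(X)$ is the full complex cone.

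Two points deserve tightening. First, the existence of the complex-analytic arc $\beta\colon\D\setminus\{0\}\to X$ with $\beta(\tau)\to[v]$ is asserted rather than proved; it is the complex curve selection lemma and merits a line. Since every irreducible component of $\overline{X}$ is the closure of a component of $X$, none lies in $H_\infty$; choose a component $Z\ni[v]$ (necessarily $\dim Z\geq 1$) and cut by $\dim Z-1$ generic hyperplanes through $[v]$: the resulting algebraic curve $C\subset Z$ passes through $[v]$ and meets $H_\infty$ in finitely many points, so every Puiseux branch of $C$ at $[v]$ lies, off $\tau=0$, in $\overline{X}\setminus H_\infty=X$. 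Second, a cosmetic issue: because of the $(1+o(1))$ correction, $C(\tau)$ need not be \emph{exactly} real and positive along any sequence you name in advance; either observe that on each circle $|\tau|=r$ the argument of $C(\tau)$ has winding number $-m\neq 0$ and therefore attains the value $0$, or more simply take $t_i=|C(\tau_i)|$ with $\arg C(\tau_i)\to 0$, which already gives $x_i/t_i\to v$. With these two repairs spelled out, the argument is complete (under the standing assumption, implicit in the paper's definition of $C_{\infty}$, that $X$ is unbounded).
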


Among other things, this result above says that  tangent cones at infinity of complex algebraic sets in $\C^n$ are complex algebraic subsets as well.

\subsection{Outer bi-Lipschitz homeomorphism at infinity}
All the Euclidean subsets are equipped with the induced Euclidean distance (outer metric). So, all the Lipschitz mappings mentioned here are supposed to be Lipschitz with respect to the outer metric, this is why they are called outer Lipschitz or bi-Lipschitz mappings 
\begin{definition}
\label{def_inf}
Let $X\subset \R^n$ and $Y\subset\R^m$ be two subsets. We say that $X$ and $Y$ are {\bf outer bi-Lipschitz homeomorphic at infinity}, if there exist compact subsets $K\subset\R^n$ and $\widetilde K\subset \R^m$ and an outer bi-Lipschitz homeomorphism $\phi \colon X\setminus K\rightarrow Y\setminus \widetilde K$.
\end{definition}

We finish this subsection reminding the invariance of the tangent cone at infinity under outer bi-Lipschitz homeomorphisms at infinity.

\begin{proposition}[Theorem 4.5 in \cite{FernandesS:2017}]\label{inv_cones}
Let $X\subset\R^n$ and $Y\subset\R^m$ be unbounded semialgebraic subsets. If $X$ and $Y$ are outer bi-Lipschitz homeomorphic at infinity, then there is an outer bi-Lipschitz homeomorphism $d\varphi\colon C_{\infty }(X)\to C_{\infty }(Y)$ with $d\varphi(0)=0$.
\end{proposition}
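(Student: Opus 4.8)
The plan is to realize $d\varphi$ as a limit of rescalings of $\phi$ at infinity. Fix a bi-Lipschitz constant $C$ for the given homeomorphism $\phi\colon X\setminus K\to Y\setminus\widetilde K$, a base point $x_0\in X\setminus K$, and set $y_0=\phi(x_0)$. For $t>0$ I would introduce the rescaled map $\phi_t(x)=\tfrac1t\phi(tx)$, defined for those $x$ with $tx\in X\setminus K$, that is, on the rescaled domain $\tfrac1t(X\setminus K)$. A direct computation shows that the family $\{\phi_t\}$ is uniformly $C$-bi-Lipschitz, since $\|\phi_t(x)-\phi_t(x')\|=\tfrac1t\|\phi(tx)-\phi(tx')\|$ and the factor $t$ cancels. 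Moreover, from $\|\phi(tx)\|\le \|y_0\|+C\|tx-x_0\|$ one gets $\|\phi_t(x)\|\le C\|x\|+O(1/t)$, so the $\phi_t$ are uniformly bounded on each closed annulus $\{r_1\le\|x\|\le r_2\}$ with $r_1>0$.

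The key geometric input is that, because $X$ is semialgebraic, the rescaled sets $\tfrac1t(X\setminus K)$ converge as $t\to\infty$ to $C_{\infty}(X)$, uniformly on compact subsets of $\R^n$; this is the conic structure at infinity of semialgebraic sets. Combining this with the uniform Lipschitz bound and uniform boundedness, an Arzel\`a--Ascoli argument along a sequence $t_k\to\infty$, together with a diagonal extraction over an exhaustion by annuli, produces a subsequential limit $d\varphi\colon C_{\infty}(X)\setminus\{0\}\to\R^m$, uniform on compacts. Since each $\phi_t$ maps $\tfrac1t(X\setminus K)$ into $\tfrac1t(Y\setminus\widetilde K)$, and the latter converges to $C_{\infty}(Y)$, the image of $d\varphi$ lies in $C_{\infty}(Y)$. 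Passing the two-sided estimate $\tfrac1C\|x-x'\|\le\|\phi_t(x)-\phi_t(x')\|\le C\|x-x'\|$ to the limit shows $d\varphi$ is $C$-bi-Lipschitz, hence injective; and $\|d\varphi(x)\|\le C\|x\|$ lets me extend continuously by $d\varphi(0)=0$ while preserving the bi-Lipschitz bounds.

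It remains to check surjectivity onto $C_{\infty}(Y)$. Given $w\in C_{\infty}(Y)$, the convergence $\tfrac1{t_k}(Y\setminus\widetilde K)\to C_{\infty}(Y)$ along the chosen sequence lets me pick $y_k\in Y\setminus\widetilde K$ with $\tfrac1{t_k}y_k\to w$. Setting $x_k=\phi^{-1}(y_k)$ and $v_k=\tfrac1{t_k}x_k$, the bi-Lipschitz bound on $\phi$ keeps $\{v_k\}$ bounded, so after passing to a subsequence $v_k\to v\in C_{\infty}(X)$. Then $\phi_{t_k}(v_k)=\tfrac1{t_k}\phi(x_k)=\tfrac1{t_k}y_k\to w$, while the uniform convergence of $\phi_{t_k}$ together with $v_k\to v$ gives $\phi_{t_k}(v_k)\to d\varphi(v)$; hence $d\varphi(v)=w$. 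Thus $d\varphi$ is a bi-Lipschitz bijection fixing the origin, as claimed.

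The main obstacle I anticipate is the interplay between the moving domains $\tfrac1t(X\setminus K)$ and the fixed limit cone: one must ensure that the Arzel\`a--Ascoli limit is actually defined on all of $C_{\infty}(X)\setminus\{0\}$, and not merely on the lower Kuratowski limit of the rescaled domains, and symmetrically that every point of $C_{\infty}(Y)$ is genuinely approached. This is exactly where semialgebraicity is essential, entering through the good convergence of the rescaled sets to the tangent cone at infinity. A secondary subtlety is that $\phi$ is not assumed semialgebraic, so no curve-selection argument is available to define $d\varphi$ pointwise; the limit is a priori only subsequential, and bijectivity must therefore be established directly, via the lower Lipschitz bound for injectivity and the construction above for surjectivity, rather than by exhibiting a two-sided inverse built from $\phi^{-1}$.
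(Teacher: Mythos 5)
Your proof is correct and takes essentially the same approach as the source: the paper itself only quotes this result from \cite{FernandesS:2017}, and the proof there---as the paper's own use of it in Theorem \ref{multiplicities} makes explicit, where $d\varphi(v)$ is realized as $\lim \varphi(n_k v)/n_k$ along a sequence $n_k\to+\infty$---is the same rescaling/normal-families construction (the at-infinity analogue of Sampaio's local argument in \cite{Sampaio:2016}), with the curve-selection characterization of $C_{\infty}$ (Proposition 4.4 of \cite{FernandesS:2017}) supplying exactly the lower convergence of the rescaled domains to the cone that you correctly identify as the crux. The one detail you leave implicit---applying Arzel\`a--Ascoli to the maps $\phi_t$ with moving domains $\tfrac1t(X\setminus K)$---is handled in the standard way (extend each $\phi_t$ to a uniformly Lipschitz map on fixed annuli via McShane/Kirszbraun, or diagonalize over a countable dense subset of $C_{\infty}(X)\setminus\{0\}$ using approximating points furnished by curve selection), and does not affect correctness.
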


\subsection{Relative multiplicities at infinity}\label{section:lelong}

Let $X\subset \C^n$ be a complex algebraic set with $p=\dim X\geq 1$. 
Let $X_1,\cdots,X_r$ be the irreducible components of $C_{\infty }(X)$. In the primary decomposition of $I^*$ there is a unique primary 
ideal $\mathcal{Q}_i$ which defines $X_i$ as a set. Let $\mathcal{P}_i$ be the minimal prime associated with $\mathcal{Q}_i$. 
The relative multiplicity associated with $X_i$ can be defined algebraically as 
the length of the algebra 
$$(\CC[x_1,...,x_n]/\mathcal{Q}_i)_{(\mathcal{P}_i)}.$$  

Below we give a (equivalent) geometric definition of relative multiplicities which suits better our purposes.

Let $\pi\colon\C^n\to \C^p$ be a linear projection such that 
$$\pi^{-1}(0)\cap(C_{\infty }(X))=\{0\}.$$ 
Therefore, $\pi|_{ X}\colon X\rightarrow \C^p$ (resp. $\pi|_{ C_{\infty }(X)}\colon C_{\infty }(X)\rightarrow \C^p$) is a ramified cover
with degree equal to ${\rm deg} (X)$ (resp. ${\rm deg} (C_{\infty }(X))$) (see \cite{Chirka:1989}, Corollary 1 in the page 126). 
In particular, $\pi|_{X_j}\colon X_j\rightarrow \C^p$ is a ramified cover with degree equal to ${\rm deg} (X_j)$, for each $j=1,\cdots,r$. 
Moreover, if the ramification locus of $\pi|_X$ (resp. $\pi|_{ C_{\infty }(X)}$) is not empty, it is a codimension $1$ complex algebraic 
subset $\sigma(X)$ (resp. $\sigma(C_{\infty }(X))$) of $\C^p$. Let us denote 
$\Sigma=\pi|_X^{-1}(\sigma(X))$ and $\Sigma'=\pi|_{C_{\infty }(X)}^{-1}(\sigma(C_{\infty }(X)))$.

Fix $j\in \{1,\cdots,r\}$. For a point $v\in X_j\setminus (C_{\infty }(\Sigma)\cup C_{\infty }(\Sigma'))$, let $\eta, R >0$ such that 
$$C_{\eta,R }(v'):=\{w\in \C^p|\, \exists t>0; \|tv'-w\|\leq \eta t \}\setminus B_R(0)\subset \C^p\setminus \sigma(X)\cup \sigma(C_{\infty }(X)),$$
where $v'=\pi(v)$. Thus, the number of connected components of $\pi|_X^{-1}(C_{\eta,R }(v'))$ (resp. $\pi|_{ X_j}^{-1}(C_{\eta,R }(v'))$) is equal to ${\rm deg} (X)$ (resp. ${\rm deg} (X_j)$). Moreover, there exist a connected component $V$ of $\pi|_{ X_j}^{-1}(C_{\eta,R }(v'))$ such that $v\in V$ and a compact subset $K\subset \C^n$ such that for each connected component $A_i$ of $\pi|_X^{-1}(C_{\eta,R }(v'))$, we have $C_{\infty }(A_i)\cap (\C^n\setminus K)\subset \pi|_{ C_{\infty }(X)}^{-1}(C_{\eta,R }(v'))$. Then, we denote by $k_X^{\infty }(v)$ to be the number of connected components $A_i'$s such that $C_{\infty }(A_i)\cap (\C^n\setminus K)\subset V$. By definition, we can see that $k_X^{\infty }$ is locally constant and as $X_j\setminus (C_{\infty }(\Sigma)\cup C_{\infty }(\Sigma'))$ is connected, $k_X^{\infty }$ is constant on $X_j\setminus (C_{\infty }(\Sigma)\cup C_{\infty }(\Sigma'))$. Thus, we define $k_X^{\infty }(X_j)=k_X^{\infty }(v)$. In particular, $k_X^{\infty }(w)=k_X^{\infty }(v)$ for all $w\in \pi^{-1}(v')\cap X_j$ and, therefore, we obtain
\begin{equation}\label{kurdyka-raby}
{\rm deg}(X)=\sum\limits_{j=0}^r k_X^{\infty }(X_j)\cdot{\rm deg}(X_j).
\end{equation}
Notice that $k_X^{\infty }$ does not depend of $\pi$.

\section{Main results}\label{section:mainresults}

Let $Z\subset\R^{\ell}$ be a path connected subset. Given two points $q,\tilde{q}\in Z$, we recall that the \emph{inner distance} in $Z$ 
between $q$ and $\tilde{q}$ is the number $d_Z(q,\tilde{q})$ below:
$$d_Z(q,\tilde{q}):=\inf\{ \mbox{length}(\gamma) \ | \ \gamma \ \mbox{is an arc on} \ Z \ \mbox{connecting} \ q \ \mbox{to} \ \tilde{q}\}.$$

\begin{theorem}\label{multiplicities}
Let $X\subset\C^n$ and $Y\subset\C^m$ be complex algebraic subsets, with pure dimension $p=\dim X=\dim Y$, and let $X_1,\dots,X_r$ and $Y_1,\dots,Y_s$ be the irreducible components of the tangent cones at infinity $C_{\infty }(X)$ and $C_{\infty }(Y)$ respectively. If $X$ and $Y$ are outer bi-Lipschitz homeomorphic at infinity, then $r=s$ and, up to a re-ordering of indices,  $k_X^{\infty }(X_j)=k_Y^{\infty }(Y_j)$, $\forall \ j$.
\end{theorem}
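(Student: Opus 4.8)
The plan is to show that an outer bi-Lipschitz homeomorphism at infinity induces a bijection between the tangent cones at infinity and their components that preserves the relative multiplicities $k^\infty$.
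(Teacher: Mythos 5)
Your proposal is not a proof but a restatement of the theorem's conclusion: saying you will ``show that the homeomorphism induces a bijection between the components preserving $k^\infty$'' is exactly what must be proved, and you supply no mechanism for either half. Two genuinely nontrivial ingredients are missing. First, the existence of the bijection between components: an outer bi-Lipschitz homeomorphism at infinity $\varphi\colon X\setminus K\to Y\setminus \T K$ does not act on the tangent cones in any obvious way; one needs the invariance of the tangent cone at infinity (Proposition~\ref{inv_cones}), which produces a tangent map $d\varphi\colon C_\infty(X)\to C_\infty(Y)$, itself outer bi-Lipschitz with $d\varphi(0)=0$, obtained as a limit $d\varphi(v)=\lim \varphi(n_k v)/n_k$ along a suitable sequence $n_k\to\infty$. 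Only this gives $r=s$ and the matching $d\varphi(X_j)=Y_j$. Nothing in your plan indicates how the map on cones arises, and this is a cited theorem, not a formality.

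Second, and this is the heart of the matter, you give no argument for why $k_X^\infty(X_j)=k_Y^\infty(Y_j)$; the relative multiplicity is defined by counting sheets of a branched cover $\pi|_X\colon X\to\C^p$ over a conical region $C_{\eta,R}(v')$, and it is not preserved by homeomorphisms in general --- the metric structure is essential. The paper's proof is a metric contradiction argument: if $k_X^\infty(X_j)>k_Y^\infty(Y_j)$, then two sequences $z_k,w_k$ with $\|z_k\|=\|w_k\|=t_k$, lying in \emph{distinct} sheets over $C_{\eta,R}(v')$ but both converging in direction to $v$, must have images $\varphi(z_k),\varphi(w_k)$ in the \emph{same} sheet $\T V_{jm}$ on the $Y$ side. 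One then shows (via the linear bound $\|y\|\leq C\|x\|$ on $Y$ near infinity, which requires its own compactness argument) that a single sheet is outer bi-Lipschitz to the base region, so $d_Y(\varphi(z_k),\varphi(w_k))=o_\infty(t_k)$, hence $d_X(z_k,w_k)=o_\infty(t_k)$; but distinct sheets force $d_X(z_k,w_k)\geq Ct_k$, a contradiction. Without this sheet-counting and inner-distance dichotomy (or some substitute for it), your plan would equally ``prove'' the false claim that $k^\infty$ is a topological invariant, so the gap is not a matter of detail but of the essential idea.
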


\begin{proof}
This proofs shares its structure with the corresponding result in the local case in \cite{FernandesS:2016}.
By hypotheses there are compact subsets $K\subset \C^n$ and $\T K\subset \C^m$ and an outer bi-Lipschitz homeomorphism $\varphi:X\setminus K\to Y\setminus \T K$.  
Let $S=\{n_k\}_{k\in\N}$ be a sequence of positive real numbers such that 
$$n_k\to +\infty  \quad\mbox{and} \quad \frac{\varphi(n_kv)}{n_k}\to d\varphi(v),$$ 
where $d\varphi$ is a tangent map at infinity of $\varphi$ like in Theorem \ref{inv_cones} (for more details, see \cite{FernandesS:2017}, Theorem 4.5). Since, $d\varphi$ is an outer bi-Lipschitz homeomorphism, we get $r=s$ and there is a permutation $P\colon\{1,\dots,r\}\to \{1,\dots,s\}$ such that $d\varphi (X_j)=Y_{P(j)}$ $\forall \ j.$ This is why we can suppose $d\varphi(X_j)=Y_j$ $\forall \ j$ up to a re-ordering of indices.

Let $\pi\colon\C^n\to \C^p$ be a linear projection such that 
$$\pi^{-1}(0)\cap(C_{\infty }(X)\cup C_{\infty }(Y))=\{0\}.$$ 
Let us denote the ramification locus of 
$$\pi_{| X}\colon X\to \C^p \quad \mbox{and} \quad \pi_{| C_{\infty }(X)}\colon C_{\infty }(X)\to \C^p$$ 
by $\sigma(X)$ and $\sigma(C_{\infty }(X))$ respectively. By similar way, we define $\sigma(Y)$ and $\sigma(C_{\infty }(Y))$. Let us denote $\Sigma=\pi_X^{-1}(\sigma(X))$, $\Sigma'=\pi|_{C_{\infty }(X)}^{-1}(\sigma(C_{\infty }(X)))$, $\T \Sigma=\pi|_Y^{-1}(\sigma(Y))$ and $\T \Sigma'=\pi|_{C_{\infty }(Y)}^{-1}(\sigma(C_{\infty }(Y)))$.

Let us suppose that there is $j\in\{1,\dots,r\}$ such that $k_X^{\infty}(X_j)> k_Y^{\infty}(Y_j)$. Thus, given a unitary point $v\in X_j\setminus (C_{\infty }(\Sigma)\cup C_{\infty }(\Sigma'))$ such that $w=d\varphi(v)\in Y_j\setminus (C_{\infty }(\T \Sigma)\cup C_{\infty }(\T \Sigma'))$, let $\eta,R >0$ such that 
$$C_{\eta,R }(v'):=\{w\in \C^p|\, \exists t>0; \|tv'-w\|<\eta t\}\setminus B_R(0)\subset \C^p\setminus \sigma(X)\cup \sigma(C_{\infty }(X))$$
and 
$$C_{\eta,R }(w'):=\{w\in \C^p|\, \exists t>0; \|tv'-w\|<\eta t\}\setminus B_R(0)\subset \C^p\setminus \sigma(Y)\cup \sigma(C_{\infty }(Y)),$$
where $v'=\pi(v)$ and $w'=\pi(d\varphi(v))$. Therefore, there are at least two different connected components $V_{ji}$ and $V_{jl}$ of $\pi^{-1}(C_{\eta,R }(v'))\cap X$ and sequences $\{z_k\}_{k\in \N}\subset V_{ji}$ and $\{w_k\}_{k\in \N}\subset V_{jl}$ such that $t_k=\|z_k\|=\|w_k\|\in S$, $\lim \frac{1}{t_k}z_k=\lim \frac{1}{t_k}w_k=v$ and $\varphi(z_k),\varphi(w_k)\in \T V_{jm}$, where $\T V_{jm}$ is a connected component of  $\pi^{-1}(C_{\eta,R }(w'))\cap Y$.

Let us choose linear coordinates $(x,y)$ in $\C^n$ such that $\pi(x,y)=x$. 

\noindent {\bf Claim.} There exist a compact subset $K\subset \C^n$ and a constant $C>0$ such that $\|y\|\leq C\|x\|$ for all $(x,y)\in Y\setminus K$. 

If this Claim is not true, there exists a sequence $\{(x_k,y_k)\}\subset Y$ such that $\lim\limits_{k\to+\infty}\|(x_k,y_k)\|=+\infty$ and $\|y_k\|> k\|x_k\|$. Thus, up to a subsequence,  one can suppose that $\lim\limits_{k\to+\infty}\frac{y_k}{\|y_k\|}=y_0$. Since $\frac{\|x_k\|}{\|y_k\|}< \frac{1}{k}$,  $(0,y_0)\in C_{\infty }(Y)$, which is a contradiction, because $y_0\not=0$, $(0,y_0)\in \pi^{-1}(0)$ and $\pi^{-1}(0)\cap C_{\infty }(Y)=\{0\}$. Therefore, the Claim is true. In particular, $V=\T V_{jm}$ is outer bi-Lipschitz homeomorphic to $C_{\eta,R }(w')$ and since $\varphi(z_k),\varphi(w_k)\in\T V_{jm}$ $\forall$ $k\in \N$, we have 
$$\|\varphi(z_k)-\varphi(w_k)\|=o_{\infty }(t_k)$$ 
and
$$d_Y(\varphi(z_k),\varphi(w_k))\leq d_V(\varphi(z_k),\varphi(w_k))=o_{\infty }(t_k),$$
where $g(t_k)=o_{\infty }(t_k)$ means $\lim\limits _{k\to \infty }\frac{g(t_k)}{t_k}=0$.
Now, since  $X$ is outer bi-Lipschitz homeomorphic to $Y$, we have $d_X(z_k,w_k)\leq o_{\infty }(t_k)$.
On the other hand, since $z_k$ and $w_k$ lie in different connected components of $\pi^{-1}(C_{\eta,R }(v'))\cap X$, there exists a constant $C>0$ such that $d_X(z_k,w_k)\geq Ct_k$, which is a contradiction.

We have proved that $k_X^{\infty}(X_j)\leq k_Y^{\infty}(Y_j)$, $j=1,\cdots,r$. By similar arguments, using that $\varphi^{-1}$ is an outer bi-Lipschitz map, we also can prove $k_Y^{\infty}(Y_j)\leq k_X^{\infty}(X_j)$, $j=1,\cdots,r$.
\end{proof}

\subsection{Degree as an outer bi-Lipschitz invariant at infinity} \label{subsection:applications}
The first application of our main results proved in the previous section is the outer bi-Lipschitz invariance of the degree of complex algebraic curves in $\C^n$.

\begin{theorem}\label{curves}
Let $X\subset\C^n$ and $Y\subset\C^m$ be complex algebraic subsets, with $\dim X=\dim Y=1$. If $X$ and $Y$ are outer bi-Lipschitz homeomorphic at infinity, then ${\rm deg}(X)={\rm deg}(Y)$.
\end{theorem}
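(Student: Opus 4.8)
The plan is to derive Theorem~\ref{curves} as a more or less immediate corollary of Theorem~\ref{multiplicities}. The key observation is that for a one-dimensional complex algebraic set $X\subset\C^n$, the tangent cone at infinity $C_{\infty}(X)$ is itself a one-dimensional complex algebraic set, hence a finite union of lines through the origin. By Proposition~\ref{algebricity} these are exactly the irreducible components $X_1,\dots,X_r$, and each $X_j$ is a complex line, so ${\rm deg}(X_j)=1$ for every $j$. The formula~\eqref{kurdyka-raby} therefore collapses to
\begin{equation*}
{\rm deg}(X)=\sum_{j=1}^r k_X^{\infty}(X_j),
\end{equation*}
expressing the degree purely in terms of the relative multiplicities at infinity.

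With this reduction in hand the argument is short. First I would apply Theorem~\ref{multiplicities} to the pair $X,Y$: since they are outer bi-Lipschitz homeomorphic at infinity and have pure dimension $1$, the theorem gives $r=s$ and, after re-ordering, $k_X^{\infty}(X_j)=k_Y^{\infty}(Y_j)$ for all $j$. Then I would simply sum these equalities and invoke the collapsed degree formula on both sides to conclude
\begin{equation*}
{\rm deg}(X)=\sum_{j=1}^r k_X^{\infty}(X_j)=\sum_{j=1}^r k_Y^{\infty}(Y_j)={\rm deg}(Y).
\end{equation*}

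The only genuine content to check is that the hypotheses of Theorem~\ref{multiplicities} are actually met, namely that $X$ and $Y$ have \emph{pure} dimension $1$. A general one-dimensional algebraic set is automatically of pure dimension (there are no lower-dimensional associated components lurking once the dimension is the maximal one), so this is harmless, but I would want to state it explicitly or reduce to the pure-dimensional case. I would also note that the relative multiplicities $k_X^{\infty}(X_j)$ are well-defined via the construction in Section~\ref{section:lelong}, which requires choosing a projection $\pi$ generic with respect to the tangent cone; this is possible since $C_{\infty}(X)$ is a finite union of lines.

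The main (and really the only) obstacle is packaged entirely inside Theorem~\ref{multiplicities}, which supplies the equality of relative multiplicities under outer bi-Lipschitz equivalence at infinity. Given that result, the curve case carries no additional difficulty precisely because the irreducible components of a curve's tangent cone at infinity all have degree $1$, so the degree is nothing but the total relative multiplicity, a quantity the theorem already shows is invariant. This is exactly the phenomenon the introduction flags when it says ``the case $d=1$ comes very easily from the last mentioned result''; the genuine work for higher-dimensional components — where ${\rm deg}(X_j)$ need no longer equal $1$ and must itself be recovered — is deferred to the surface case.
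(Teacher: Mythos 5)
Your proposal is correct and is essentially identical to the paper's proof: both reduce to the observation that the irreducible components of $C_{\infty}(X)$ are complex lines of degree $1$, so Eq.~\eqref{kurdyka-raby} gives ${\rm deg}(X)=\sum_j k_X^{\infty}(X_j)$, and then Theorem~\ref{multiplicities} yields the equality of these sums. One small caveat: your parenthetical claim that a one-dimensional algebraic set is \emph{automatically} of pure dimension is false (it may have isolated points, e.g.\ $V(xy,\,x(x-1))$), but your hedge is the right fix --- isolated points lie in a compact set and contribute nothing to the degree or to the geometry at infinity, so discarding them reduces harmlessly to the pure-dimensional case.
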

\begin{proof}
Let $X_1,\dots,X_r$ and $Y_1,\dots,Y_s$ be the irreducible components of the tangent cones at infinity $C_{\infty }(X)$ and $C_{\infty }(Y)$ respectively. Since $\dim X=\dim Y=1$, we have that $X_1,\dots,X_r$ and $Y_1,\dots,Y_s$ are complex lines. Thus, 
$${\rm deg}(X_1)=\dots={\rm deg}(X_r)={\rm deg}(Y_1)=\dots={\rm deg}(Y_s)=1$$
and using Eq. \ref{kurdyka-raby}, we get ${\rm deg}(X)=\sum\limits_{j=0}^r k_X^{\infty }(X_j)$ and ${\rm deg}(Y)=\sum\limits_{j=0}^s k_X^{\infty }(Y_j)$. Therefore, by Theorem \ref{multiplicities}, ${\rm deg}(X)={\rm deg}(Y)$.
\end{proof}

Let us fix $d\in \N$.

\begin{theorem}\label{main-theorem}
The statements below are equivalent.
\begin{enumerate}
\item[\~A1(d)] Let $X\subset \C^n$ and $Y\subset \C^m$ be two complex analytic sets with $\dim X=\dim Y=d$. If their germs at $0\in\C^n$ and $0\in\C^m$, respectively, are outer bi-Lipschitz homeomorphic,  then  $m(X,0)=m(Y,0)$.
\item[A1(d)] Let $X\subset \C^n$ and $Y\subset \C^m$ be two complex algebraic sets with $\dim X=\dim Y=d$. If $X$ and $Y$ are outer bi-Lipschitz homeomorphic at infinity, then  ${\rm deg}(X)={\rm deg}(Y).$
\end{enumerate}
\end{theorem}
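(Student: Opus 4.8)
The plan is to prove both implications by reducing each conjecture to the case of homogeneous (cone) sets, where the local picture and the picture at infinity coincide. I would first isolate the following elementary bridge. If $C\subset\C^n$ and $C'\subset\C^m$ are homogeneous algebraic sets of dimension $d$, then ${\rm deg}(C)=m(C,0)$ (this is the Remark in Section~\ref{section:preliminaries}), and $C$ and $C'$ are outer bi-Lipschitz homeomorphic at infinity if and only if the germs $(C,0)$ and $(C',0)$ are outer bi-Lipschitz homeomorphic. The last equivalence I would get from the dilation invariance of a cone: each $s_\lambda(x)=\lambda x$ is a similarity, hence bi-Lipschitz, and maps $C$ onto itself; conjugating a germ homeomorphism by $s_{1/\mu}\circ(\,\cdot\,)\circ s_\lambda$ with suitable $\lambda,\mu$ turns a neighbourhood of $0$ into a neighbourhood of infinity (and conversely), while the similarity factors keep the bi-Lipschitz constants under control. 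Thus, for cones of dimension $d$, the hypotheses and the conclusions of \~A1(d) and of A1(d) literally coincide.

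Granting this bridge, for \~A1(d)$\Rightarrow$A1(d) I would argue as follows; assume that $X,Y$ have pure dimension $d$ (reducing to this case if necessary) and are outer bi-Lipschitz homeomorphic at infinity. By Proposition~\ref{inv_cones} and the proof of Theorem~\ref{multiplicities}, the tangent map at infinity $d\varphi$ is an outer bi-Lipschitz homeomorphism $C_{\infty}(X)\to C_{\infty}(Y)$ carrying $X_j$ onto $Y_j$ for each $j$, with $k_X^{\infty}(X_j)=k_Y^{\infty}(Y_j)$. Each $X_j,Y_j$ is irreducible, homogeneous and of dimension $d$, and $d\varphi$ restricts to an outer bi-Lipschitz homeomorphism at infinity $X_j\to Y_j$. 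By the bridge this produces outer bi-Lipschitz homeomorphic germs $(X_j,0)$ and $(Y_j,0)$, so \~A1(d) gives $m(X_j,0)=m(Y_j,0)$, i.e. ${\rm deg}(X_j)={\rm deg}(Y_j)$. Substituting this together with the equality of relative multiplicities into Equation~\eqref{kurdyka-raby} yields ${\rm deg}(X)={\rm deg}(Y)$.

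The converse A1(d)$\Rightarrow$\~A1(d) is the mirror image run at the origin. Given analytic germs $(X,0),(Y,0)$ of (pure) dimension $d$ that are outer bi-Lipschitz homeomorphic, I would invoke the local counterparts of the tools above: the invariance of the tangent cone $C_0(X)$ at the origin from \cite{Sampaio:2016}, the invariance of the local relative multiplicities from \cite{FernandesS:2016}, and the local analogue of Equation~\eqref{kurdyka-raby}, namely $m(X,0)=\sum_j k_X^{0}(X_j)\,m(X_j,0)$ over the irreducible components $X_j$ of $C_0(X)$. As before, these components are homogeneous of dimension $d$, the induced homeomorphism of tangent cones matches $X_j$ with $Y_j$ and equates their relative multiplicities, and the bridge converts the resulting germ equivalence into an outer bi-Lipschitz equivalence at infinity. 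Hence A1(d) gives ${\rm deg}(X_j)={\rm deg}(Y_j)$, that is $m(X_j,0)=m(Y_j,0)$, and summing recovers $m(X,0)=m(Y,0)$.

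The logical skeleton is short; the work lies in making the reduction legitimate. The delicate points I expect to be the main obstacle are: verifying that the restriction of $d\varphi$ to a single component is genuinely bi-Lipschitz at infinity and that the components of the tangent cone are pure of dimension $d$, so that the very same index $d$ of the conjecture applies to them; and, in the converse direction, having available the local relative-multiplicity formula together with its bi-Lipschitz invariance, which is precisely the germ version whose analogue at infinity is Theorem~\ref{multiplicities}. Finally, the dilation argument in the bridge should be written out explicitly, tracking how the constants of $s_\lambda$ and $s_{1/\mu}$ combine, rather than left to intuition.
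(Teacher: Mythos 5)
Your logical skeleton coincides with the paper's. For \~A1(d)$\Rightarrow$A1(d) you do exactly what the paper does: Theorem~\ref{multiplicities} matches the components $X_j\leftrightarrow Y_j$ and the relative multiplicities at infinity, \~A1(d) applied to the homogeneous germs $(X_j,0)$, $(Y_j,0)$ gives ${\rm deg}(X_j)={\rm deg}(Y_j)$, and Eq.~(\ref{kurdyka-raby}) sums up. One simplification you missed: the map $d\varphi$ of Proposition~\ref{inv_cones} is a \emph{global} outer bi-Lipschitz homeomorphism of the tangent cones with $d\varphi(0)=0$, so its restriction to $X_j$ is already a germ equivalence at the origin; your detour through ``bi-Lipschitz at infinity plus the converse bridge'' is unnecessary there. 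For A1(d)$\Rightarrow$\~A1(d) the paper is shorter than you: it invokes the reduction from \cite{FernandesS:2016} (\~A1(d) holds if and only if it holds for homogeneous algebraic sets) and then only needs the single fact, remarked in \cite{Sampaio:2016}, that cones whose germs at the vertex are outer bi-Lipschitz homeomorphic are \emph{globally} outer bi-Lipschitz homeomorphic. Your ``mirror'' argument via the local tangent cone, local relative multiplicities and the local analogue of Eq.~(\ref{kurdyka-raby}) is essentially an unpacking of that citation; the local ingredients do exist in \cite{Sampaio:2016} and \cite{FernandesS:2016}, so this route is legitimate, just longer.

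The one step that would genuinely fail as written is your proof of the bridge. A single conjugation $s_{1/\mu}\circ\varphi\circ s_\lambda$ of a germ representative $\varphi\colon C\cap B_\epsilon\to C'$ is defined on $s_\lambda^{-1}(C\cap B_\epsilon)=C\cap B_{\epsilon/\lambda}$, which is a \emph{bounded} set for every choice of $\lambda,\mu$: dilations preserve boundedness, so no conjugation by similarities ever turns a neighbourhood of $0$ into a neighbourhood of infinity (the complement of a compact set). What is true, and what you need, is obtained differently: either extend $\varphi$ homogeneously from a link, $\Phi(x)=\frac{\|x\|}{\delta}\,\varphi\bigl(\frac{\delta x}{\|x\|}\bigr)$, and check this radial extension is bi-Lipschitz on the cones (this is the globalization remarked in \cite{Sampaio:2016}, which the paper uses), or take an Arzel\`a--Ascoli limit of the uniformly bi-Lipschitz rescalings $s_{\lambda_k}^{-1}\circ\varphi\circ s_{\lambda_k}$ on exhausting domains. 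For the converse direction of your bridge (equivalence at infinity implies germ equivalence at $0$ for cones), note that it follows directly from Proposition~\ref{inv_cones}: a homogeneous set $C$ satisfies $C_{\infty}(C)=C$, so the tangent map of \cite{FernandesS:2017} is a global bi-Lipschitz homeomorphism $C\to C'$ fixing $0$. With the bridge justified by these means (rather than by one conjugation), your proposal is correct; as it stands, the dilation argument is the concrete gap.
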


\begin{proof} 

As we pointed out  in the introduction of the paper; we know from  \cite{FernandesS:2016} that statement \~A1($d$) holds true if and only 
if it is true by considering just homogeneous complex algebraic sets. But, if $A\subset \C^n$ is a homogeneous complex 
algebraic set, then ${\rm deg}(A)=m(A,0)$. 

From now, we are ready to start the proof of the theorem. First, let us suppose that statement A1($d$) is true. 
Since cones which are outer bi-Lipschitz homeomorphic as germs at their vertices are globally outer bi-Lipschitz homeomorphic, 
as was remarked in \cite{Sampaio:2016}, it follows from observations above that  \~A1($d$) holds true as well. 
Second, let us suppose that \~A1($d$) holds true. Let $X \subset\C^n$ and $Y\subset \C^m$ be two complex algebraic sets with $d=\dim X=\dim Y$. Let us suppose that $X$ and $Y$ are outer bi-Lipschitz homeomorphic at infinity. Then, there exist $K\subset \C^n$ and $\T K\subset\C^m$ two compact subsets and a outer bi-Lipschitz homeomorphism $\varphi\colon X\setminus K\to Y\setminus \T K$. Let us denote  by $X_1,\dots,X_r$ and $Y_1,\dots,Y_s$ the irreducible components of the cones $C_{\infty }(X)$ and $C_{\infty }(Y)$ respectively. It comes from Theorem \ref{multiplicities} that $r=s$ and the outer bi-Lipschitz homeomorphism $d\varphi \colon C_{\infty }(X)\rightarrow C_{\infty }(Y)$, up to re-ordering of indices, sends $X_i$ onto $Y_i$ and $k_X^\infty(X_i)=k_Y^\infty(Y_i)$ $\forall$ $i$. Furthermore, $d\varphi(0)=0$.

By Proposition \ref{algebricity}, the tangent cones at infinity $C_{\infty }(X)$ and $C_{\infty }(Y)$ are homogeneous complex algebraic subsets. Thus, the irreducible components $X_1,\dots,X_r$ and $Y_1,\dots,Y_s$ are homogeneous complex algebraic subsets as well. Since \~A1($d$) is true, we have $m(X_i,0)=m(Y_i,0)$ $\forall \ i$, hence ${\rm deg}(X_i)={\rm deg}(Y_i)$ $\forall \ i$. Finally, by using Eq. \ref{kurdyka-raby}, we get ${\rm deg}(X)={\rm deg}(Y)$ which give us that  A1($d$) is true.
\end{proof}

\begin{theorem}\label{main-result}
\begin{itemize}
\item [(1)] Let $X\subset \CC^{N+1}$ and $Y\subset \CC^{M+1}$ be two complex analytic surfaces. 
If $(X,0)$ and $(Y,0)$ are outer bi-Lipschitz homeomorphic, then $m(X,0)=m(Y,0)$.
\item [(2)] Let $X\subset \CC^{N+1}$ and $Y\subset \CC^{M+1}$ be two complex algebraic surfaces. 
If $X$ and $Y$ are outer bi-Lipschitz homeomorphic at infinity, then ${\rm deg}(X)={\rm deg}(Y)$.
\end{itemize}
\end{theorem}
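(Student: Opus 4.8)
The plan is to reduce the surface statement to the homogeneous case and then extract multiplicity from purely topological data of the cone. By Theorem~\ref{main-theorem}, statements (1) and (2) are equivalent, so it suffices to prove one of them; moreover, by the reduction to homogeneous sets mentioned in that proof (following \cite{FernandesS:2016}), it is enough to establish statement~(1) for homogeneous surface germs. Thus I would first argue that the problem collapses to the following: \emph{if $S,S'\subset\CC^{N+1}$ are homogeneous irreducible complex algebraic surfaces whose germs at the origin are outer bi-Lipschitz homeomorphic, then $m(S,0)=m(S',0)$}. The irreducible case suffices because Theorem~\ref{multiplicities} already controls how the tangent cone at infinity breaks into irreducible components together with the relative multiplicities $k^{\infty}$, so via Eq.~\ref{kurdyka-raby} the degree is determined once the degrees of the individual irreducible homogeneous components are determined.

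The heart of the argument, as advertised in the introduction, is to recover the degree of a homogeneous irreducible affine algebraic surface $S\subset\CC^{N+1}$ from the \emph{abstract topology} of $S\setminus\{0\}$ --- indeed, from the torsion of a suitable homology group. Here I would use the projectivization $\bbP(S)\subset\CP^{N}$, a projective curve whose degree equals $\deg(S)=m(S,0)$. The punctured cone $S\setminus\{0\}$ is the total space of a $\CC^{*}$-bundle (an orbifold/Seifert $\Sp^1$-bundle after passing to links) over $\bbP(S)$, and the Euler class of this bundle is precisely (up to sign/normalization) the degree $\deg(S)$, because the hyperplane bundle $\mathcal{O}(1)$ restricted to $\bbP(S)$ has degree $\deg(S)$. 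The plan is therefore to run the Leray spectral sequence of the bundle map $S\setminus\{0\}\to \bbP(S)$ and read off $\deg(S)$ from the differential $d_2$ which is cap product with the Euler class; this differential introduces exactly a $\Z/\deg(S)\Z$ torsion into one of the homology groups $H_*(S\setminus\{0\};\Z)$. Concretely, for the circle bundle over the (possibly singular, but irreducible) projective curve $\bbP(S)$, the Gysin-type sequence yields that $H_1$ or $H_2$ of the link carries a torsion subgroup isomorphic to $\Z/\deg(S)\Z$, so $\deg(S)$ is read off as the order of this torsion and is thus an invariant of the abstract homeomorphism type of $S\setminus\{0\}$.

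Once this topological identification of $\deg(S)=m(S,0)$ is in place, the outer bi-Lipschitz hypothesis enters only weakly: an outer bi-Lipschitz homeomorphism of germs is in particular a homeomorphism of germs, hence induces a homeomorphism $S\setminus\{0\}\cong S'\setminus\{0\}$ of the punctured cones, which forces the torsion invariants to agree and therefore $m(S,0)=m(S',0)$. Assembling the pieces: Theorem~\ref{multiplicities} gives the matching of irreducible components and relative multiplicities of the tangent cones at infinity; the torsion computation gives equality of the degrees of each matched pair of irreducible homogeneous components; and Eq.~\ref{kurdyka-raby} then assembles these into $\deg(X)=\deg(Y)$, which by Theorem~\ref{main-theorem} is equivalent to the local statement $m(X,0)=m(Y,0)$.

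The main obstacle I anticipate is the singularity of the projective curve $\bbP(S)$: the surface cone $S$ need not be smooth away from $0$, so $\bbP(S)$ can be a singular curve and $S\setminus\{0\}\to\bbP(S)$ is not an honest fiber bundle but only a $\CC^{*}$-fibration with possibly non-trivial local structure over the singular points. I would handle this either by normalizing $\bbP(S)$ and carefully tracking how the degree (an intersection-theoretic quantity computed on the singular model) relates to the Euler class of the Seifert structure on the link, or by working directly with the Leray spectral sequence of the non-locally-trivial map and verifying that the $d_2$ differential still computes cap product with the first Chern class of $\mathcal{O}(1)|_{\bbP(S)}$ whose degree is $\deg(S)$ regardless of singularities. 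The delicate point is ensuring that the torsion extracted is exactly $\Z/\deg(S)\Z$ and not contaminated by contributions from the singular fibers; verifying this compatibility between the algebro-geometric degree and the topological Euler number of the Seifert fibration is the technical crux of the proof.
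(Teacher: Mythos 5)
Your proposal matches the paper's proof essentially step for step: the same reduction to irreducible homogeneous surfaces via \cite{FernandesS:2016} and Theorem~\ref{main-theorem}, then recovery of ${\rm deg}(S)$ as the torsion subgroup $\Z/{\rm deg}(S)\Z$ of $H^2(S\setminus\{0\};\Z)$ via the Leray spectral sequence of $\pi\colon S\setminus\{0\}\to\bbP(S)$, whose only nontrivial differential $d_2\colon E_2^{0,1}\to E_2^{2,0}$ is multiplication by the first Chern class of the tautological bundle. The ``technical crux'' you anticipate is in fact a non-issue, and the paper dispatches it exactly as your second option suggests: since the scaling $\CC^*$-action is free on $S\setminus\{0\}$, the map $\pi$ is the restriction (pullback) of the genuinely locally trivial tautological $\CC^*$-bundle over $\bbP^N$, so there are no singular fibers and the local systems $R^q\pi_*\underline{\Z}$ are trivial irrespective of the singularities of $\bbP(S)$, with $H^2(\bbP(S),\Z)\cong\Z$ because $\bbP(S)$ is an irreducible projective curve.
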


\begin{proof}
By Theorem 2.1 in \cite{FernandesS:2016}, it is enough to show (1) when $X$ and $Y$ are two irreducible homogeneous complex algebraic sets. 
The Theorem 2.1 in \cite{FernandesS:2016} was stated with the outer bi-Lipschitz homeomorphism taking account the ambient spaces where the germs were embedded, however, let us emphasize here that the 
same proof of the Theorem 2.1 in \cite{FernandesS:2016} works when we consider the outer bi-Lipschitz homeomorphism between the germs does not taking account the ambient spaces. 
Thus, the part (1) of the theorem is reduced to prove the following proposition.

\begin{proposition}
Let $(S,0)$ and $(S',0)$ be homogeneous irreducible affine algebraic surfaces. If $S\setminus\{0\}$ is homeomorphic to $S'\setminus\{0\}$, then we have the equality ${\rm deg}(S)={\rm deg}(S')$.
\end{proposition}
\begin{proof}
We have the isomorphism $H^2(S\setminus\{0\};\Z)\cong H^2(S'\setminus\{0\};\Z)$. Hence it is enough to show that the torsion part of this cohomology group is isomorphic to $\Z/{\rm deg}(S)\Z$.

Let $\pi:S\setminus\{0\}\to\bbP(S)$ denote the quotient map by the $\CC^*$-action. The fibration $\pi$ is a pullback of the tautological bundle minus the zero section over the projective space $\bbP^N$
where $\bbP(S)$ is embedded, and hence the local systems $R^q\pi_*\underline{\Z}_{S\setminus\{0\}}$ are pullbacks from the corresponding local systems over $\bbP^N$. 
Since the projective space is simply-connected the local systems have trivial monodromy. Hence, the second page of the Leray spectral sequence for $\pi$ is
$$E_2^{p,q}=H^p(\bbP(S),\Z).$$

Since $E_2^{p,q}$ vanishes for $p\neq 0,1,2$ and $q\neq 0,1$, the differentials $d_i$ are all zero for $i\geq 3$ and the only non-zero $d_2$ differential is:
$$d_2:H^0(\bbP(S),\Z)\cong\Z\to H^2(\bbP(S),\Z)\cong\Z,$$
which coincides with multiplication by the first Chern class of the tautological bundle over $\bbP^N$. Hence $d_2$ is multiplication by ${\rm degree}(S)$. We deduce the isomorphisms
$$E_\infty^{0,2}=E_2^{0,2}=0,$$
$$E_\infty^{1,1}=E_2^{1,1}=\Z^{b_1},$$
$$E_\infty^{2,0}=E_3^{2,0}=\Z/{\rm deg}(S)\Z,$$
where $b_1$ is the first Betti number of $\bbP(S)$.

By the vanishing of $E_\infty^{0,2}$ there is a short exact sequence
$$0\to E_\infty^{2,0}\to H^2(S\setminus\{0\},\Z)\to E_\infty^{1,1}\to 0,$$
which splits by the freeness of $E_\infty^{1,1}$. Hence the torsion part of $H^2(S\setminus\{0\},\Z)$ is isomorphic to $\Z/{\rm deg}(S)\Z$ as needed.
\end{proof}

Now we prove part (2) of the main theorem. Since we have proved part (1), we see that the statement $\tilde A1$($2$) of 
Theorem \ref{main-theorem} has a positive answer, hence $A1$($2$) has a positive answer as well. Therefore, part (2) is proved.
\end{proof}


Let us denote by $\mathcal{C}_{1,\infty }$ the set of all complex algebraic sets $X\subset \C^n$, such that each irreducible component $X_j$ of $C_{\infty }(X)$ satisfies $\dim {\rm Sing}(X_j)\leq 1$.

\begin{theorem}\label{application1}
Let $f,g\colon \C^n\to \C$ be two polynomials. Suppose that $V(f)\in \mathcal{C}_{1,\infty }$. Suppose there exist compact subsets $K,\T K\subset \C^n$ and an outer bi-Lipschitz homeomorphism $\varphi:\C^n\setminus K\to \C^n\setminus \T K$ such that $\varphi(V(f)\setminus K)=V(g)\setminus \T K$. Then $V(g)\in \mathcal{C}_{1,\infty }$ and ${\rm deg}(V(f))={\rm deg}(V(g))$.
\end{theorem}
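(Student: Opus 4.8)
The plan is to combine the invariance theorems already proved with a reduction to the homogeneous components of the tangent cones at infinity, and then to determine the degree of each such component from the topology of the complement of its vertex.

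First I would feed the hypothesis into Proposition~\ref{inv_cones} and (the proof of) Theorem~\ref{multiplicities}. The outer bi-Lipschitz homeomorphism $\varphi$ at infinity carrying $V(f)\setminus K$ onto $V(g)\setminus\T K$ yields an outer bi-Lipschitz homeomorphism $d\varphi\colon C_{\infty}(V(f))\to C_{\infty}(V(g))$ with $d\varphi(0)=0$ which, after reordering, sends the irreducible component $X_j$ of $C_{\infty}(V(f))$ onto the component $Y_j$ of $C_{\infty}(V(g))$, and with $k_{V(f)}^{\infty}(X_j)=k_{V(g)}^{\infty}(Y_j)$ for all $j$. By Proposition~\ref{algebricity} these components are homogeneous irreducible hypersurfaces in $\C^n$, and by Eq.~\ref{kurdyka-raby} we have ${\rm deg}(V(f))=\sum_j k_{V(f)}^{\infty}(X_j)\,{\rm deg}(X_j)$ and the analogous formula for $V(g)$. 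Since the relative multiplicities already agree, the statement reduces to two assertions: (a) $\dim\Sing{Y_j}\le 1$ for every $j$, and (b) ${\rm deg}(X_j)={\rm deg}(Y_j)$ for every $j$, where $d\varphi$ restricts to an outer bi-Lipschitz homeomorphism $X_j\to Y_j$ of homogeneous hypersurfaces with $\dim\Sing{X_j}\le 1$.

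For (a) I would invoke the Lipschitz regularity theorem for complex analytic sets (a germ outer bi-Lipschitz homeomorphic to a smooth germ is itself smooth). Applied at a regular point $p\in X_j\setminus\{0\}$, it shows that $(Y_j,d\varphi(p))$, being outer bi-Lipschitz homeomorphic to the smooth germ $(X_j,p)$, is smooth; running the same argument for $d\varphi^{-1}$ gives $d\varphi(\Reg{X_j}\setminus\{0\})=\Reg{Y_j}\setminus\{0\}$. Hence $d\varphi$ restricts to a homeomorphism $\Sing{X_j}\setminus\{0\}\to\Sing{Y_j}\setminus\{0\}$. As a homeomorphism preserves topological dimension and for complex algebraic sets this equals twice the complex dimension, we get $\dim\Sing{Y_j}=\dim\Sing{X_j}\le 1$, so $V(g)\in\mathcal{C}_{1,\infty}$.

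For (b) observe that $d\varphi$ gives a homeomorphism $X_j\setminus\{0\}\cong Y_j\setminus\{0\}$, so it suffices to prove that the degree of a homogeneous irreducible hypersurface $S\subset\C^n$ with $\dim\Sing{S}\le1$ is an invariant of the homeomorphism type of $S\setminus\{0\}$. Here $\bbP(S)\subset\bbP^{n-1}$ is a projective hypersurface with only isolated singularities, and $S\setminus\{0\}\to\bbP(S)$ is the restriction of the tautological $\C^*$-bundle, as in the proof of Theorem~\ref{main-result}. I would run the associated Leray (Gysin) spectral sequence, whose only differential is cup product with the Euler class $e=\pm\,h|_{\bbP(S)}$, the restriction of the hyperplane class. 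When $\dim\bbP(S)=1$ this reproduces exactly the torsion computation $\Z/{\rm deg}(S)\Z$ of Theorem~\ref{main-result}. In higher dimension, where the degree is no longer the torsion of a single group, I would instead recover the full cohomological data of $\bbP(S)$ from that of $S\setminus\{0\}$ via the Gysin sequence together with the Lefschetz and hard-Lefschetz-type properties available for hypersurfaces with isolated singularities, compute $\chi(\bbP(S))$, and read off ${\rm deg}(S)$ from the Euler-characteristic--degree formula for such hypersurfaces in $\bbP^{n-1}$, which is strictly monotone in the degree. Together with (a) this gives ${\rm deg}(X_j)={\rm deg}(Y_j)$ and hence ${\rm deg}(V(f))={\rm deg}(V(g))$.

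The main obstacle is precisely (b) in dimension greater than two: unlike the surface case, the degree does not survive as the torsion of $H^2(S\setminus\{0\})$, and it must be extracted from the whole Euler-characteristic package of $\bbP(S)$. The hypothesis $\dim\Sing{S}\le1$---equivalently, that $\bbP(S)$ has only isolated singularities---is exactly what makes this possible: it provides the Lefschetz-type control needed to invert the Gysin sequence and to account for the singular contributions (the Milnor numbers of the isolated singular points) to $\chi(\bbP(S))$, which one must then show are themselves topological invariants of $S\setminus\{0\}$. For the corollary on surfaces in $\C^3$ this difficulty evaporates, since there every component $X_j$ is itself a homogeneous surface, so $\dim\bbP(X_j)=1$, the bound $\dim\Sing{X_j}\le1$ holds automatically, and the clean torsion argument of Theorem~\ref{main-result} applies directly.
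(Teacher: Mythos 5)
Your opening reduction is exactly the paper's: pass to the tangent cones at infinity via Proposition~\ref{inv_cones} and Theorem~\ref{multiplicities}, match the irreducible components $X_j\leftrightarrow Y_j$ and the relative multiplicities $k^{\infty}$, and use Eq.~\ref{kurdyka-raby} to reduce everything to the degrees of the irreducible homogeneous hypersurface components. Your step (a), transferring $\dim\Sing{X_j}\le 1$ to $Y_j$ via the Lipschitz regularity theorem and invariance of dimension, is also sound. The genuine gap is step (b). At that point the paper does not attempt any abstract topological determination of the degree: it invokes Theorem 5.4 of \cite{Sampaio:2017}, which establishes invariance of the degree for irreducible homogeneous hypersurfaces with singular locus of dimension at most $1$ using the \emph{ambient} homeomorphism --- and the hypothesis of Theorem~\ref{application1} deliberately supplies an ambient bi-Lipschitz map $\varphi\colon\C^n\setminus K\to\C^n\setminus\T K$, whose tangent map $d\varphi\colon\C^n\to\C^n$ carries the pair $(\C^n,X_j)$ to $(\C^n,Y_j)$. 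Your proposal never uses this ambient datum beyond restricting to $V(f)$; what you set out to prove instead --- that the degree of a homogeneous irreducible hypersurface $S$ with $\dim\Sing{S}\le 1$ is an invariant of the abstract homeomorphism type of $S\setminus\{0\}$ --- is strictly stronger than what the theorem needs, and in dimension $\ge 3$ it is essentially an instance of the open Conjecture~\~A1, not something you can wave through.

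Concretely, the sketch of (b) does not close. A homeomorphism $S\setminus\{0\}\cong S'\setminus\{0\}$ does not intertwine the two $\C^*$-fibrations, so you cannot compare the Gysin/Leray data of the two fibrations, nor ``recover the full cohomological data of $\bbP(S)$'' from $H^*(S\setminus\{0\})$; the surface case of Theorem~\ref{main-result} succeeds precisely because the degree there is repackaged as an \emph{intrinsic} invariant of $S\setminus\{0\}$ (the torsion of $H^2$), with no need to reconstruct $\bbP(S)$. For $\dim_{\C}\bbP(S)\ge 3$ the Lefschetz hyperplane theorem gives $H^2(\bbP(S);\Z)\cong\Z$ generated by the restricted hyperplane class, so the cokernel of $d_2\colon H^0\to H^2$ vanishes and the degree leaves no torsion in $H^2(S\setminus\{0\})$: it migrates to the middle cohomology, exactly where the Milnor numbers of the isolated singularities of $\bbP(S)$ also contribute. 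Moreover $\chi(S\setminus\{0\})=0$, so $\chi(\bbP(S))$ is not directly readable from $S\setminus\{0\}$, and even granting it, strict monotonicity of $\chi$ in the degree already fails for smooth hypersurfaces (a hyperplane and a smooth quadric threefold in $\bbP^4$ both have $\chi=4$), let alone after Milnor corrections (a $15$-nodal quartic surface and a smooth cubic surface in $\bbP^3$ both have $\chi=9$). Your own text concedes the decisive step --- that the Milnor numbers are topological invariants of $S\setminus\{0\}$ --- without proving it. As written, (b) is a research program rather than a proof; the theorem should be finished, as the paper does, by applying Theorem 5.4 of \cite{Sampaio:2017} to the ambient tangent map $d\varphi$, with your arguments supplying the reduction and the claim $V(g)\in\mathcal{C}_{1,\infty}$. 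Your closing observation is correct, though: for surfaces in $\C^3$ the components are homogeneous surfaces and the torsion argument of Theorem~\ref{main-result} already suffices.
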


\begin{proof} 
By the proof of Theorem \ref{multiplicities}, we can suppose that $f$ and $g$ are irreducible homogeneous polynomials.
By Theorem 5.4 in \cite{Sampaio:2017} and by using observations in the very beginning of the proof of  
Theorem \ref{multiplicities}, it follows that ${\rm deg}(V(f)))={\rm deg}(V(g))$.
\end{proof}

\end{document}